\documentclass[11pt,reqno]{amsart}
\usepackage[T1]{fontenc}
\usepackage{graphicx}

\usepackage{color}
\definecolor{MyLinkColor}{rgb}{0,0,0.4}

\newcommand{\R}{{\mathbb R}}
\newcommand{\B}{{\mathbb B}}

\newcommand{\p}{\partial}
\newcommand{\0}{\Omega}
\newcommand{\ov}{\overline}
\newcommand{\wt}{\widetilde}
\newcommand{\wh}{\widehat}
\newcommand{\ep}{\varepsilon}

\newtheorem{thm}{Theorem}[section]

\newtheorem{lemma}[thm]{Lemma}

\theoremstyle{remark} 
\newtheorem{rem}[thm]{Remark}

\setlength{\oddsidemargin}{10mm}
\setlength{\evensidemargin}{10mm}
\setlength{\textwidth}{139mm}
\setlength{\textheight}{216mm}
\setlength{\voffset}{-5mm}

\title[Steady symmetric gravity water waves]
      {A characterization of the symmetric   steady   water waves in terms of the underlying flow}

\author[Bogdan-Vasile Matioc]{}

\subjclass[2010]{76B15, 35Q31, 35B50, 26E05}
 \keywords{Symmetry, gravity water waves, maximum principles, underlying flow}

 \email{bogdan-vasile.matioc@univie.ac.at}

\begin{document}
\maketitle

\centerline{\scshape Bogdan-Vasile Matioc}
\medskip
{\footnotesize
 \centerline{University of Vienna}
   \centerline{Nordbergstra\ss e 15}
   \centerline{1090, Vienna, Austria}
}

%

\begin{abstract}
In this paper we present a characterization of the  symmetric  rotational periodic gravity water
waves of finite depth and without stagnation points in terms of the underlying flow.
Namely, we show that such a wave  is symmetric and has a single crest and trough per period if and only if
there exists a vertical line within the fluid domain such that all
the fluid particles located on that line minimize there simultaneously their distance to the fluid bed
as they move about.
Our analysis uses the moving plane method, sharp elliptic maximum principles, and the principle of analytic continuation.
\end{abstract}

\section{Introduction}\label{S:1}

When the wind blows over a still water surface it generates waves which in time evolve into nice regular wave trains, that is, they
 become two-dimensional periodic waves that are symmetric about the crest and trough lines and   propagate at constant speed  and
without change of shape.
The study of the a priori symmetry properties of gravity water waves   goes back to  Garabedian \cite{PG65}
who used a variational approach to show that Stokes waves for which each   streamline  has a unique maximum and minimum per period
-- located beneath the crest and trough, respectively -- are symmetric.
The proof of Garabedian was simplified later on by Toland \cite{TO00}.
Assuming merely that the free surface of the irrotational flow has a single crest per period Okamoto and Sh{\=o}ji \cite{OS01} have established
the symmetry of the wave, improving thus upon the previous symmetry results.
The approach presented in \cite{OS01} is based on maximum principles and the  moving planes method, cf. \cite{BN88, JS71}, which have been applied also with
success when studying the symmetry properties of solitary waves  with and without vorticity, cf. \cite{CS88, H08, MM12}.

Concerning rotational flows, the characterization of the symmetric waves  found in \cite{OS01}  was extended
by Constantin and Escher to   finite  depth \cite{CE1} and deep water \cite{CE2} flows
without stagnation points and possessing a continuously differentiable vorticity function, under some restrictions on the
derivative of the vorticity function.
For finite depth waves without stagnation points, the latter restrictions were eliminate in \cite{CEW07}, the authors taking advantage in their 
analysis of the formulation of the problem in terms of the height function.
Sufficient conditions  which ensure the symmetry of stratified water waves can be found in \cite{Wa09}.
Concerning flows with stagnation points, it is shown in the constant vorticity case \cite{T12} that  small-amplitude gravity waves with a single crest per period are
symmetric.

Still in the context of rotational water waves of finite depth without stagnation points and  with a continuous vorticity function Hur \cite{H07} has shown that if:
$(i)$ the wave possesses a unique global  minimum per period; $(ii)$ all the streamlines attain beneath the global trough their global minimum;
and $(iii)$ the wave profile is monotone near the global trough; then the wave is symmetric and has a single crest and trough per period.
The later result was improved in \cite{MM13} where the condition $(i)$ was dropped and just one sided monotonicity was required instead of $(iii)$.

In this paper we show that in fact both conditions $(i)$ and  $(iii)$ of \cite{H07} can be omitted, and we are left with a characterization
of the  symmetry of the free surface wave  in terms of the underlying flow.
This is intriguing because the wave pattern observed at the water  surface and the underlying flow are two different aspects of the water
motion which are coupled in an extremely intricate way.
Let us emphasize that the only known explicit solution of the water wave problem is due to Gerstner \cite{Ger} (see also \cite{C4, H4}):
it is a symmetric deep water wave with only one crest per period and with all the particles located beneath  a trough
attaining  there their maximal distance to a fixed reference plane above the fluid about.
This suggests that our result could be extended to deep-water waves.
The characterization that we found is no longer valid if we allow for stagnation points: there are symmetric waves which present a Kelvin cat's eye pattern and for which
there is no vertical line within the fluid domain with the property that all
the fluid particles located on that line minimize there   their distance to the fluid bed, cf. \cite{CV11, W09}.

The outline of the paper is as follows: in Section \ref{S:2} we present the physical setting that we consider and  three equivalent mathematical formulations.
At the end of the section we present our main result Theorem \ref{T:MT}, which is proven in Section \ref{S:3}.

\section{The main result}\label{S:2}
We consider a Cartesian coordinate  system with the  $x-$axis being the direction of wave propagation, the $y$-axis pointing vertically upwards, and
we assume that the water
flow is independent of the $z$-coordinate.
From a reference frame which moves in the same direction with the wave and with the  wave speed $c$, the flow beneath the steady wave is described by
the steady Euler equations
\begin{subequations}\label{eq:P}
  \begin{equation}\label{eq:Euler}
\left\{
\begin{array}{rllll}
({u}-c) { u}_x+{ v}{ u}_y&=&-{ P}_x,\\
({ u}-c) { v}_x+{ v}{ v}_y&=&-{ P}_y-g,\\
{ u}_x+{v}_y&=&0
\end{array}
\right.\qquad \text{in $\0_\eta.$}
\end{equation}
We have taken  the fluid to be inviscid and   its  density equal to 1.
Moreover, the free surface of the wave is assumed to be the graph $y=\eta(x) $  and that the flat fluid bed is located at $y=-d$, meaning that
 the two-dimensional fluid domain   $\0_\eta $ is
\[
\0_\eta:=\{(x,y)\,:\,\text{$ x\in\R $,\,  $-d<y<\eta(x)$}\}.
\]
Furthermore, the positive constant $d$ is the average mean depth of the flow, property which implies that $\eta$ has integral mean equal to zero.
 In order to complete the mathematical system describing the motion of gravity water waves, we impose  the standard boundary conditions
\begin{equation}\label{eq:BC}
\left\{
\begin{array}{rllll}
 P&=&{P}_0&\text{on $ y=\eta(x)$},\\
 v&=&( u-c) \eta'&\text{on $ y=\eta(x)$},\\
 v&=&0 &\text{on $ y=-d$},
\end{array}
\right.
\end{equation}
 cf. \cite{ConBook, John, Kins}.
Hereby,  $ P_0$   denotes  the constant atmospheric pressure.
We shall restrict our considerations to flows for which the horizontal
velocity of each water particle is less than
the wave speed
 \begin{equation}\label{SC}
u<c\qquad\text{in $\ov \0_\eta$.}
\end{equation}
This excludes the presence of stagnation points and enables us to use equivalent formulations of the hydrodynamical problem and recent regularity properties of such flows
\cite{CE3, EM13}.
Finally,   the  vorticity of the two-dimensional flow described by the system \eqref{eq:P} is identified with the scalar function
\begin{equation*}
\omega:= { u}_y-{ v}_x\qquad\text{in $\ov\0_\eta$.}
\end{equation*}
\end{subequations}
The solutions considered in this paper correspond to periodic water waves--meaning that $\eta, u,v,P$ are periodic in $x$ and have
the same period--and possess the following
regularity
\begin{equation}\label{Reg}
 \eta\in C^2_{\rm per}(\R), \qquad u,v,P\in C^1_{\rm per}(\ov\0_\eta).
\end{equation}

The conservation of mass equation allows us to introduce the stream function $\psi\in C^2_{\rm per}(\ov\0_\eta)$ via the relations
$\nabla \psi=(-v,u-c)$ in $\ov\0_\eta$ and $\psi=0$ on $y=\eta(x)$.
Because of \eqref{SC}, one can show, cf. e.g. \cite{CS2}, that the dependence of the vorticity   on its variables takes the form
$\omega(x,y)=\gamma(-\psi(x,y))$ for all $(x,y)\in\ov\0_\eta.$
Hereby, $\gamma\in C([p_0,0])$ is the vorticity function and $p_0<0$ is the relative mass flux.
With these observations, the formulation \eqref{eq:P} can be  recast as the following problem
\begin{subequations}\label{eq:SP}
\begin{equation}\label{eq:psi}
\left\{
\begin{array}{rllll}
\Delta \psi&=&\gamma(-\psi)&\text{in}&\0_\eta,\\
\displaystyle|\nabla\psi|^2+2g(y+d)&=&Q&\text{on} &y=\eta(x),\\
\psi&=&0&\text{on}&y=\eta(x),\\
\psi&=&-p_0&\text{on} &y=-d,
\end{array}
\right.
\end{equation}
and
 \begin{equation}\label{SC1}
\psi_y<0\qquad\text{in $\ov \0_\eta$.}
\end{equation}
 \end{subequations}
The real constant $Q $ is related to the total energy of the fluid.
The equivalence of the velocity formulation \eqref{eq:P} and of the stream function formulation \eqref{eq:SP} in the setting of classical solutions
has been established   in \cite{ConBook}.
The   formulation \eqref{eq:SP} is useful because it  reduces the number of unknowns, but it also helps us to identify some properties of the underlying flow.
Particularly, because the time is eliminated from the problem the particle paths in the moving frame, that is the solutions of the ODE system
\[
\left\{
\begin{array}{lll}
 x'&=u(x,y)-c,\\
 y'&=v(x,y),
\end{array}    
\right.
\]
are also the streamlines of the steady flow and they coincide with the level curves of $\psi$.
Moreover, in view of \eqref{SC1}, they can be parametrized as graphs of periodic $C^2$- functions and they foliate the entire fluid domain.

We emphasize that due to \eqref{SC} the semi-hodograph transformation
 $\Phi:\ov\0_\eta\to\ov\0$  given by
\begin{equation}\label{semH}
\Phi(x,y):=(q,p)(x,y):=(x,-\psi(x,y))\qquad \text{for $(x,y)\in\ov\0_\eta$},
\end{equation}
where $\0:=\R\times(p_0,0),$ is a $C^2$-diffeomorphism.
Introducing the height function   $h\in C^2_{\rm per}(\ov \0)$ via the relation
\begin{equation}\label{hodo}
h(q,p):=y+d \qquad\text{for $(q,p)\in\ov\0$},
\end{equation}
 we   obtain a second equivalent formulation of \eqref{eq:P}, cf. \cite{ConBook},
\begin{subequations}\label{eq:HP}
 \begin{equation}\label{PB}
\left\{
\begin{array}{rllll}
(1+h_q^2)h_{pp}-2h_ph_qh_{pq}+h_p^2h_{qq}-\gamma h_p^3&=&0&\text{in $\0$},\\
\displaystyle 1+h_q^2+(2gh-Q)h_p^2&=&0&\text{on $p=0$},\\
h&=&0&\text{on $ p=p_0,$}
\end{array}
\right.
\end{equation}
the condition \eqref{SC} being re-expressed as
\begin{equation}\label{PBC}
 \min_{\ov \0}h_p>0.
\end{equation}
\end{subequations}
The latter formulation enables us to compare different  solutions of \eqref{eq:P} and it also helps us to  parametrize   the
streamlines of the flow, as any streamline is the graph of one of the mappings $h(\cdot,p)-d\in C^2_{\rm per}(\R),$ $p\in[p_0,0].$
Particularly, the flat bed corresponds to the choice $p=p_0$ and the free surface of the wave to $p=0.$

The main result of this paper is the following theorem.
\begin{thm} \label{T:MT}
      Consider a periodic gravity water wave solution of \eqref{eq:P} satisfying \eqref{Reg} and having a non-flat free surface.
      Moreover, assume that the vorticity function is Lipschitz continuous in $[p_1,0] $ for some $p_1\in(p_0,0). $

   Then, the wave profile is symmetric and  has only one crest and trough per period if and only if there exists a vertical line within the fluid domain such that all
   the fluid particles located on that line minimize there their distance to the fluid bed.
\end{thm}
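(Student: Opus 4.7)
The plan is to work in the height-function formulation \eqref{eq:HP}, where the problem becomes a quasilinear elliptic equation on the fixed strip $\0$ and where the map $q\mapsto h(q,p)$ parametrizes the streamlines. All comparisons will be phrased in terms of $h$ and its $q$-reflection; let $L$ denote the common period in $x$ (and hence in $q$).

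The direction ($\Rightarrow$) is the easier one. After translation, place the trough at $q=0$, so that symmetry gives $h(q,p)=h(-q,p)$, and the single crest/trough hypothesis means that $\eta$ is monotone on $(0,L/2)$, say $\eta'\ge 0$. Differentiating \eqref{PB} and the dynamic boundary condition in $q$ shows that $h_q$ solves a linear uniformly elliptic problem on the half-period rectangle $R:=(0,L/2)\times(p_0,0)$ with a linear oblique condition on $p=0$, and vanishes on the other three boundary components (by symmetry on $\{q=0\}\cup\{q=L/2\}$ and because $h\equiv 0$ on $\{p=p_0\}$). Since $h_q=\eta'\ge 0$ on $\{p=0\}$, the weak maximum principle yields $h_q\ge 0$ throughout $R$, so each streamline attains its minimum along the trough line $q=0$.

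For the non-trivial direction ($\Leftarrow$), translate so that the hypothesized line is $q=0$; the minimality condition then forces $h_q(0,p)\equiv 0$ for $p\in[p_0,0]$. Introduce $w(q,p):=h(q,p)-h(-q,p)$ on $R$. Since the PDE in \eqref{PB} depends on $h_q$ only through $h_q^2$, the reflection $h(-\,\cdot\,,\,\cdot\,)$ also solves \eqref{PB}; subtracting the two equations and applying the mean value theorem to the polynomial nonlinearities, together with the Lipschitz continuity of $\gamma$ on $[p_1,0]$, yields a linear uniformly elliptic equation $\mathcal{L}w=0$ on $R$ with bounded measurable coefficients. The dynamic condition at $p=0$ linearizes to an oblique condition $\alpha w_q+\beta w_p+\delta w=0$, whose coefficient signs are pinned down by the strict inequalities $h_p>0$ and $Q-2gh>0$ (the latter from the dynamic condition itself). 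Moreover $w$ vanishes on the other three sides of $\p R$, and crucially $w_q(0,p)=2h_q(0,p)\equiv 0$.

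To conclude $w\equiv 0$, the argument combines a moving-planes sliding step, a sharp Hopf boundary lemma, and analytic continuation. For $\lambda$ in a neighborhood of $L/2$ one applies the weak maximum principle adapted to the oblique condition to obtain a one-sided comparison between $h$ and its $\lambda$-reflection $h(2\lambda-\,\cdot\,,\,\cdot\,)$; sliding $\lambda$ continuously down to $0$, and using $h_q(0,\,\cdot\,)\equiv 0$ to preserve the comparison through the limit, yields a definite sign of $w$ on $R$, say $w\ge 0$. The sharp Hopf lemma applied on the side $\{q=0\}$ of $R$ then forces $w_q(0,p)>0$ wherever $w\not\equiv 0$ nearby, which, combined with $w_q(0,\,\cdot\,)\equiv 0$, produces $w\equiv 0$ on an open subset of $R$. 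Since $h$ is real analytic on $\R\times(p_1,0]$ by the regularity theory of \cite{EM13}, the principle of analytic continuation propagates this vanishing to the entire upper substrip, and standard elliptic unique continuation extends it to all of $R$; periodic extension gives the symmetry $h(q,p)=h(-q,p)$. Re-applying the maximum principle argument of the forward direction to $h_q$ then excludes, via the non-flatness of $\eta$, the possibility of additional critical points, yielding the single crest/trough conclusion. The main obstacle is the sliding step in this periodic setting with a nonlinear oblique condition on the free surface: careful sign tracking in the linearized boundary condition, together with a Serrin-type corner refinement of Hopf's lemma near the corners of $R$ where Dirichlet and oblique conditions meet, is what will require the most care.
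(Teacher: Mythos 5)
Your necessity argument is essentially the paper's (differentiate \eqref{PB}, put the trough at $q=0$, apply the maximum principle to $h_q$ on a half-period rectangle), and is fine. The sufficiency direction, however, has genuine gaps. The first is the sliding step itself. You propose to start the moving plane at $\lambda$ near $L/2$ and slide down to $0$, invoking ``the weak maximum principle adapted to the oblique condition'' to get the initial comparison. But in the periodic strip the reflected region never becomes a narrow cap, and the linearized dynamic condition on $p=0$ reads
\[
(h_q+\wt h_q)\,w_q+(2gh-Q)(h_p+\wt h_p)\,w_p+2g\,\wt h_p^2\,w=0 ,
\]
whose zeroth-order coefficient $2g\,\wt h_p^2$ is \emph{positive} --- the wrong sign for any maximum principle with an oblique boundary condition (a positive maximum of $w$ on $p=0$ produces no contradiction, since $2gh-Q<0$). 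So there is no position from which your sliding can legitimately begin, and your claim that the ``coefficient signs are pinned down'' favorably is incorrect. The workable mechanism is geometric and runs in the opposite direction: analyticity of the streamlines plus non-flatness give $\eta'>0$ on $(0,\ep)$, so for small $\lambda>0$ the reflection of $\{0<x<\lambda\}\cap\0_\eta$ across $x=\lambda$ lies inside $\0_\eta$; one takes $\Lambda$ maximal with this inclusion property and proves symmetry about $x=\Lambda$, i.e.\ about the \emph{first crest}, not about the trough line $q=0$.

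The second gap is that the entire difficulty of the proof lies in what happens when the sliding stops, and your proposal defers exactly this (``a Serrin-type corner refinement \dots will require the most care'') without supplying an argument. One must rule out several tangency configurations of the reflected surface with the original one; the hardest occurs when the touching point sits on the reflection line itself. There Serrin's corner lemma (Lemma \ref{L:S}) cannot be applied in the $(q,p)$-variables, because the mixed coefficient $\wt h_p\wt h_q$ of $\mathcal{L}$ satisfies the bound \eqref{SL} only at the corner above $q=2\Lambda$, not at the one above $q=\Lambda$; the resolution is to pass back to the stream function in physical variables and apply the corner lemma to $e^{\beta x}\bigl(\psi(2\Lambda-x,y)-\psi(x,y)\bigr)$, and this is the only place the Lipschitz continuity of $\gamma$ is actually used --- your mean-value-theorem invocation of it in forming $\mathcal{L}w$ is not where it matters. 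Finally, even granting $w\equiv0$, symmetry about the trough line does not yield a single crest per period, and you cannot ``re-apply the forward direction'' to $h_q$, since that argument needs $\eta'\ge0$ on a half-period --- precisely what is not yet known. The single crest/trough conclusion comes instead from observing that $h$ restricted to $[0,2\Lambda]\times[p_0,0]$ extends $2\Lambda$-periodically to a solution of \eqref{eq:HP}, so that by analytic continuation $2\Lambda$ is the minimal period.
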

\begin{rem}
The Theorem \ref{T:MT} concerns waves whose profiles are  symmetric with respect to the crest and  to the trough lines, the wave profile being strictly monotone between consecutive crests and troughs.
 In fact,  the symmetry of the wave profile ensures that $u$ is symmetric and $v$ is anti-symmetric with respect to the crest and to the trough lines.
 These claims can be easily deduced from properties derived in the proof of Lemma \ref{L:1}.
 \end{rem}
 \begin{rem}
 The Lipschitz continuity of the vorticity function on $[p_1,0],$  for some $p_1\in(p_0,0) $ which can be chosen  as close to $0$ as we want,
 is needed only once in the proof  of Lemma \ref{L:2} (the case $(D)$), and it seems to us that
 it cannot be omitted.
\end{rem}

\section{Proof of the main result}\label{S:3}

Our approach uses the moving plane  method: we reflect the wave surface with respect to vertical lines close to line on which the  fluid particles attain their minimal distance to the bed and move the line of reflection
to the right until an extremal position is reached.
Then, we show  that the limiting line is the unique crest line and the wave  is symmetric with respect to it.
Besides using recent regularity results for the streamlines and the profile of gravity waves, cf. \cite{EM13}, in our   analysis we employ
sharp maximum principles for elliptic partial differential equations.
For completeness, we state the following result.

\begin{lemma}[Serrin's corner point lemma]\label{L:S}
 Let
 \[R=\{(q,p)\in\R^2\,:\, a<q<b,\, p_0<p<f(q)\}
 \] where $a<b$, $f \in C^2([a,b], (p_0,\infty))$ and $f'(a)=0$ [resp. $f'(b)=0$].
 Let further $H\in C^2(\ov R)$ satisfy $\mathcal{L}H\geq 0$ in $R$ for a uniformly elliptic operator
$\mathcal{L}=a_{ij}\p_{ij}+b_i\p_i$ with continuous coefficients in $\ov R.$
Additionally, assume that there exists a positive constant $K$ such that
\begin{equation}\label{SL}
|a_{12}(q,p)|\leq K (q-a) \qquad\text{[resp. $|a_{12}(q,p)|\leq K (b-q)$]}
\end{equation}
for all $(q,p)\in R$.
If the corner point $P=(a,f(a))$ [resp. $P=(b,f(b))$] satisfies $H(P)=0$   and if $H<0$ in $R$, then either
\[
\frac{\p w}{\p s}<0\qquad\text{or}\qquad \frac{\p^2 w}{\p s^2}<0 \qquad\text{at $P$,}
\]
where $s\in R^2$ is any direction at $P$ that enters $R$ non-tangentially.
\end{lemma}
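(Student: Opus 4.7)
The plan is to argue by contradiction, following Serrin's classical barrier approach for right-angle corners. I treat only the case $P = (a, f(a))$; the case $P = (b, f(b))$ is symmetric. Since $f'(a) = 0$ and the adjacent arc $\{q = a\}$ is vertical, the two $C^2$ boundary arcs of $R$ meeting at $P$ form a right angle. Fix a non-tangential direction $s = (s_1, s_2)$ entering $R$, so that $s_1 > 0$ and $s_2 < 0$. A Taylor expansion combined with $H < 0$ in $R$ and $H(P) = 0$ gives $\partial_s H(P) \le 0$ and, if equality holds there, $\partial^2_s H(P) \le 0$; the lemma therefore reduces to excluding the simultaneous vanishings $\partial_s H(P) = \partial^2_s H(P) = 0$. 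I assume them and seek a contradiction.

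The natural wedge barrier is
\[
\phi(q,p) := (q-a)\,[f(q) - p],
\]
which is non-negative in $\overline R$ near $P$, vanishes on both incident boundary arcs, and has Hessian $\bigl(\begin{smallmatrix} 0 & -1 \\ -1 & 0 \end{smallmatrix}\bigr)$ at $P$; in particular $\partial^2_s\phi(P) = -2 s_1 s_2 > 0$. A direct calculation, using $f'(a) = 0$ to control $a_{11}\phi_{qq}$ and the hypothesis $|a_{12}(q,p)| \le K(q-a)$ to control the cross term $2a_{12}\phi_{qp} = -2a_{12}$, yields $|\mathcal{L}\phi(q,p)| \le C\bigl[(q-a) + (f(q) - p)\bigr]$ in a small neighborhood of $P$. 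Since $\phi$ itself is not a subsolution, I pass to the modified barrier
\[
\tilde\phi(q,p) := \phi(q,p)\,\exp\bigl[\lambda(q-a) + \lambda(f(q) - p)\bigr], \qquad \lambda > 0.
\]
Writing $\mathcal{L}\tilde\phi = g\mathcal{L}\phi + \phi\mathcal{L}g + 2a_{ij}\phi_i g_j$ with $g$ the exponential factor, uniform ellipticity ($a_{11}, a_{22} \ge \nu$) together with $|a_{12}| \le K(q-a)$ makes the mixed term contribute at least $\nu\lambda g\bigl[(q-a)+(f(q)-p)\bigr]$, which for $\lambda$ large dominates the $O((q-a)+(f(q)-p))$ contribution from $g\mathcal{L}\phi$ and the $O(\lambda^2 g r^2)$ contribution from $\phi\mathcal{L}g$, where $r^2 := (q-a)^2 + (f(q)-p)^2$. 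Hence $\mathcal{L}\tilde\phi \ge 0$ in a sufficiently small neighborhood $N$ of $P$. The multiplicative structure preserves $\tilde\phi = 0$ on $\partial R \cap \overline N$, $\tilde\phi > 0$ in $R \cap N$, $\tilde\phi(P) = 0$, and, because $g(P) = 1$ and $\phi$ already vanishes to second order at $P$, leaves the Hessian of $\tilde\phi$ at $P$ equal to that of $\phi$; in particular $\partial^2_s\tilde\phi(P) = -2 s_1 s_2 > 0$.

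With $\tilde\phi$ in hand the weak maximum principle closes the argument. Set $F := H + \varepsilon\tilde\phi$ for small $\varepsilon > 0$: then $\mathcal{L}F \ge 0$ in $N$; $F \le 0$ on $\partial R \cap \overline N$ since $H \le 0$ there (by continuity from $H < 0$ in $R$) and $\tilde\phi = 0$; and $F \le 0$ on the interior arc $\partial N \cap R$ for $\varepsilon$ small, since $H \le -\mu < 0$ there while $\tilde\phi$ is bounded. Therefore $F \le 0$ throughout $N$, i.e.\ $H \le -\varepsilon \tilde\phi$ in $N$. Setting $G(t) := H(P+ts) + \varepsilon\tilde\phi(P+ts)$, one has $G(t) \le 0$ on a short interval $[0,\delta]$ and $G(0) = G'(0) = 0$ (the latter using the assumption $\partial_s H(P)=0$ together with $\tilde\phi$ vanishing to second order at $P$). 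Taylor's formula forces $G''(0) \le 0$, yet
\[
G''(0) = \partial^2_s H(P) + \varepsilon\,\partial^2_s\tilde\phi(P) = 0 + \varepsilon(-2 s_1 s_2) > 0,
\]
a contradiction.

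The main obstacle is the barrier construction. Additive corrections of the form $\phi \pm M r^\alpha$ fail: one sign makes $\mathcal{L}$ go the wrong way, while the other breaks the non-negativity of $\tilde\phi$ on one of the two boundary arcs. The multiplicative exponential weight reconciles both requirements simultaneously because it preserves the zero set of $\phi$ while its mixed term with $\phi$ supplies a positive $\mathcal{L}$-contribution of the correct order. The hypothesis $|a_{12}(q,p)| \le K(q-a)$ is exactly what permits this mixed term to be controlled by the $a_{11},a_{22}$ ellipticity near the corner; without it one cannot kill the offending $-2a_{12}$ term in $\mathcal{L}\phi$ at $q = a$, and the whole scheme collapses.
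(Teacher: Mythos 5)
The paper offers no proof of this lemma beyond the citation to Serrin's Lemma~2, so your argument has to stand on its own; it is in fact the standard barrier proof of that result, and most of it is sound. The reduction to excluding $\p_s H(P)=\p_s^2H(P)=0$, the choice $\phi=(q-a)(f(q)-p)$ with Hessian $\bigl(\begin{smallmatrix}0&-1\\-1&0\end{smallmatrix}\bigr)$ at $P$, the estimate $|\mathcal{L}\phi|\le C\bigl[(q-a)+(f(q)-p)\bigr]$ (which is precisely where $f'(a)=0$ and $|a_{12}|\le K(q-a)$ are used), the exponential weight with $\lambda$ fixed first and the neighborhood $N$ shrunk afterwards so that $\mathcal{L}\wt\phi\ge0$, and the final one--dimensional Taylor contradiction are all correct.

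The gap is in the maximum-principle step, at the assertion that $F=H+\varepsilon\wt\phi\le 0$ on $\p N\cap R$ ``since $H\le-\mu<0$ there.'' The arc $\p N\cap R$ is \emph{not} compactly contained in $R$: its closure meets $\p R$ at two points, $A_1$ on the segment $\{q=a\}$ and $A_2$ on the curve $\{p=f(q)\}$, and the hypotheses give only $H\le 0$ on $\p R$ by continuity. If $H$ vanishes at $A_1$ or $A_2$ then $\sup_{\p N\cap R}H=0$ and no such $\mu$ exists; this situation is not exotic --- take $\mathcal{L}=\Delta$, $R=(0,1)\times(-1,0)$, $P=(0,0)$, $H(q,p)=qp$, which satisfies every hypothesis of the lemma and vanishes identically on both arcs adjacent to $P$ --- and it is exactly the situation in the paper's applications, where $H$ is a difference of a function and its reflection. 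The conclusion $F\le0$ on the outer arc is still true, but your justification for it is false, so the weak maximum principle cannot yet be invoked. The standard repair: split $\p N\cap R$ into a middle piece compactly contained in $R$ (where your argument works) and two end pieces near $A_1,A_2$. On an end piece, either $H(A_i)<0$ and continuity suffices, or $H(A_i)=0$; in the latter case $A_i$ is an interior point of a $C^2$ boundary arc at which $H$ attains its maximum over $\ov R$, so Hopf's boundary point lemma yields a strictly negative inward derivative, hence $-H\ge c\,d_i$ near $A_i$ with $d_1=q-a$, $d_2=f(q)-p$. Since $\wt\phi\le C\min(d_1,d_2)$ near $\p R$, choosing $\varepsilon\le c/C$ gives $F\le0$ there as well. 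With this patch inserted, your proof is complete.
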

\begin{proof} See  \cite[Lemma 2]{JS71}.
\end{proof}

In order to prove the  main result, let us first observe that the assumption
that all the fluid particles located on a vertical line minimize there their distance to the fluid bed is equivalent
to saying that all the mappings $h(\cdot,p), p\in[p_0,0],$ attain their global minimum on the same vertical line.
Whence, Theorem \ref{T:MT} is  an improvement upon \cite[Theorem 3.1]{MM13} where an additional monotonicity assumption on the wave profile close to the
global trough was made.

\begin{lemma}[The necessity] \label{L:1} Assume that   $(\eta,u,v,P)$ is a solution of \eqref{eq:P} satisfying \eqref{Reg}.
If $\eta$ is symmetric and has only one crest and trough per period, then all the particles located on the trough line minimize there their distance to the fluid bed.
\end{lemma}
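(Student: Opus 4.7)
The plan is to work in the height function formulation, promote the symmetry of the free surface to symmetry of the entire flow, and then use a linear maximum principle for the $q$-derivative of $h$ to conclude that every streamline is monotone on each half-period.

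First I would set up notation: let $L$ denote the common period and $x_0$ the position of the trough, so that $\eta(2x_0-q)=\eta(q)$ for every $q\in\R$ and, by the single crest/trough hypothesis, $\eta$ attains its minimum (resp.\ maximum) on $[x_0,x_0+L)$ only at $x_0$ (resp.\ at $x_0+L/2$). In the formulation \eqref{eq:HP} the principal part, the Bernoulli boundary condition, and the bottom condition are all invariant under $q\mapsto 2x_0-q$; since $\eta(2x_0-q)+d=\eta(q)+d$, the reflected function $\wt h(q,p):=h(2x_0-q,p)$ is another solution of \eqref{eq:HP} in the class \eqref{PBC} with the same Dirichlet trace on $p=0$. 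The Dirichlet problem for \eqref{PB} subject to $h_p>0$ is uniquely solvable once the profile is prescribed (equivalently, the semilinear problem \eqref{eq:SP} has a unique solution with $\psi_y<0$, obtained by a maximum-principle argument for the difference of two solutions), so $\wt h\equiv h$ and $h$ is symmetric about $q=x_0$; combining this with $L$-periodicity yields the analogous symmetry about the crest line $q=x_0+L/2$.

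Next, differentiating \eqref{PB} in $q$ shows that $w:=h_q$ solves a linear, uniformly elliptic equation of the form
\[
(1+h_q^2)\,w_{pp}-2h_p h_q\,w_{pq}+h_p^2\,w_{qq}+b_1 w_p+b_2 w_q=0 \quad\text{in }\0,
\]
with coefficients depending continuously on $h$ and its second derivatives; the term $\gamma(-p)h_p^3$ differentiates to $3\gamma(-p)h_p^2 h_{pq}$, so $\gamma'$ never appears. On the rectangle $R:=(x_0,x_0+L/2)\times(p_0,0)$ one has $w=0$ on the two vertical sides by the symmetries of $h$ just established, $w\equiv 0$ on the bottom $p=p_0$ since $h\equiv 0$ there, and on the top $p=0$ the trace is $\eta'(q)\geq 0$ because the assumption of a single crest and trough per period forces $\eta$ to be strictly increasing on $[x_0,x_0+L/2]$. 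The maximum principle then gives $w\geq 0$ in $\ov R$, so $h(\cdot,p)$ is non-decreasing on $[x_0,x_0+L/2]$ for every $p\in[p_0,0]$; the symmetry of $h$ about $x_0$ turns this into non-increasing on $[x_0-L/2,x_0]$. Therefore each streamline $y=h(\cdot,p)-d$ attains its minimum over a full period at $q=x_0$, and by \eqref{hodo} the particle on the line $q=x_0$ is at its minimal distance to the bed, which is the desired conclusion.

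The only non-routine ingredient is the uniqueness statement used in the first step: even among monotone solutions, uniqueness of \eqref{eq:HP} for a prescribed Dirichlet trace on $p=0$ is a maximum-principle assertion for the quasilinear Dubreil--Jacotin operator rather than an immediate consequence of the regularity \eqref{Reg}, and the cleanest route is to argue in the stream function formulation and compare $\psi(x,y)$ with $\psi(2x_0-x,y)$ using \eqref{SC1} and the Lipschitz bound on $\gamma$. Once this is in hand, the second step is a standard elliptic maximum principle argument on a rectangle and does not need the corner-point Lemma \ref{L:S}.
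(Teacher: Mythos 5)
Your strategy coincides with the paper's: reflect $h$ about the trough line, use the maximum principle on the difference of the two solutions of \eqref{eq:HP} to upgrade symmetry of the profile to symmetry of every streamline, and then apply the maximum principle to $w=h_q$ on the half-period rectangle, where $w\geq 0$ on the boundary. Two points in your write-up need repair, however.

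First, the step ``differentiating \eqref{PB} in $q$'' is not available under the regularity \eqref{Reg} alone: the linear equation you write for $w=h_q$ contains $w_{qq}=h_{qqq}$, $w_{pq}=h_{qpq}$, etc., i.e.\ third-order derivatives of $h$, while \eqref{Reg} only provides $h\in C^2_{\rm per}(\ov\0)$. The paper closes this gap by invoking \cite[Proposition 2.1]{EM13}, which gives that the distributional derivatives $\p_q^m h$ belong to $C^1_{\rm per}(\ov\0)$ and that $h_q$ is a weak solution of a divergence-form uniformly elliptic equation; one then upgrades to $h_q\in C^2_{\rm per}(\ov\0)$ so that the maximum principles apply classically. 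Without some such regularity input your equation for $w$ is only formal, and this is the one non-routine ingredient of the necessity proof that your argument omits.

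Second, your closing suggestion that the uniqueness step is best carried out in the stream function formulation ``using \eqref{SC1} and the Lipschitz bound on $\gamma$'' is both unnecessary and unavailable here: Lemma \ref{L:1} assumes only $\gamma\in C([p_0,0])$, the Lipschitz hypothesis being reserved for one sub-case of the sufficiency proof and only near the surface. The height-function route you describe first is the correct one precisely because in the $(q,p)$ variables $\gamma$ is evaluated at the same argument for $h$ and for its reflection $\wt h$, so the difference $\gamma(h_p^3-\wt h_p^3)=\gamma\,(h_p^2+h_p\wt h_p+\wt h_p^2)(h_p-\wt h_p)$ factors algebraically and contributes only a first-order coefficient to the linear operator for $H=\wt h-h$, which has no zeroth-order term; the weak maximum principle then gives $H\equiv 0$ with no regularity of $\gamma$ beyond continuity. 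With these two corrections your argument is the paper's proof.
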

\begin{proof} Let $T>0$ be the minimal period of the wave.
Without restricting the generality, we may assume that $x=0 $ is the trough line, so that the crest line is the vertical line $x=T/2.$
Because the particle paths coincide with the streamlines, it suffices to show that the map $h(\cdot,p)$ is symmetric with respect to $q=T/2$ and strictly increasing on $[0,T/2]$   for each $p\in[p_0,0].$

The assumption $\eta=\eta(T-\cdot)$ implies that $h(\cdot,0)=h(T-\cdot,0).$
Taking into account that $\wt h:=h(T-\cdot,\cdot)$   is also a solution of \eqref{eq:HP},
we find that $H=\wt h-h\in C^2_{\rm per}(\ov\0)$ satisfies $H=0$ on $\p\0$ and  solves the elliptic equation $\mathcal{L}H=0$ in $\ov\0$, whereby
\begin{equation}\label{EO}
\mathcal{L}H:=(1+\wt h_q^2)H_{pp}-2\wt h_p\wt h_qH_{pq}+\wt h_p^2H_{qq}+a_1H_q+a_2H_p
\end{equation}
and
\begin{equation}\label{coeff}
\begin{aligned}
 a_1&:=(h_q+\wt h_q)  h_{pp}-2h_p h_{pq},\\
 a_2&:=(h_p+\wt h_p)  h_{qq}-2\wt h_q  h_{pq}-\gamma(h_p^2+h_p\wt h_p+\wt h^2_p).
\end{aligned}
\end{equation}
The weak elliptic maximum principle ensures that $h =h(T-\cdot,\cdot) $ in $\ov\0,$ so that all the streamlines  are symmetric with respect to the crest line $x=T/2.$
Consequently, $h_q(0,p)=h_q(T/2,p)=0$ for every $p\in[p_0,0].$
Altogether,  we find that $h_q\geq0$ on $\p R,$ whereby $R:=(0,T/2)\times (p_0,0).$

Our final goal is to use elliptic maximum principles and to show  that $h_q>0$ in $R$.
Therefore, we infer from  \cite[Proposition 2.1]{EM13} (see also \cite{CS1}),   that the distributional derivatives
$\p_q^m h\in C^1_{\rm per}(\ov\0)$ for all integers $m\geq1$, with $w:=h_q$ being the weak solution of  the elliptic equation
\begin{equation}\label{EL}
\left(\frac{1}{h_p}\p_q w\right)_q-\left(\frac{h_q}{h_p^2}\p_p w\right)_q-\left(\frac{h_q}{h_p^2}\p_q w\right)_p+
\left(\frac{1+h^2_q}{h_p^3}\p_p w\right)_p=0\qquad \text{in $\0$.}
\end{equation}
But then, it is not difficult to see these properties together with $h\in C^2_{\rm per}(\ov\0)$ yield additional regularity  $h_q\in C^2_{\rm per}(\ov\0),$
meaning that $h_q$ is a classical solution of \eqref{EL}.
The weak and strong elliptic maximum principles, cf. e.g. \cite{GT01}, ensure then that the derivative $h_q$ is strictly positive in $R$,
which is the desired claim.
\end{proof}

 We are left to prove the sufficiency claim.

\begin{lemma}[The sufficiency]\label{L:2}
 Consider a periodic gravity water wave solution $(\eta,u,v,P)$ of \eqref{eq:P} satisfying \eqref{Reg} and having a non-flat   surface.
 Moreover, assume that the vorticity function is Lipschitz continuous in $[p_1,0] $, whereby $p_1\in(p_0,0). $

   If there exists a vertical line within the fluid domain such that all
   the fluid particles located on that line minimize there their distance to the fluid bed, then the wave
   has a single crest and trough per period and is symmetric with respect to the crest line.
\end{lemma}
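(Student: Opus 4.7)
The plan is to carry out a moving plane argument in the height-function formulation \eqref{eq:HP}. After a horizontal translation I may assume the hypothesized vertical line is $\{q=0\}$, so that $h(0,p)=\min_{q\in\R}h(q,p)$ for every $p\in[p_0,0]$. For $\lambda>0$ introduce the reflection $\wt h_\lambda(q,p):=h(2\lambda-q,p)$ and set $H_\lambda:=\wt h_\lambda-h$ on the rectangle $R_\lambda:=(0,\lambda)\times(p_0,0)$. Subtracting the quasilinear equation \eqref{PB} for $h$ from the same equation for $\wt h_\lambda$, exactly as in the derivation of \eqref{EO}--\eqref{coeff}, shows that $H_\lambda$ satisfies a linear uniformly elliptic equation $\mathcal{L}_\lambda H_\lambda=0$ in $R_\lambda$; the analogous subtraction on the Bernoulli line $\{p=0\}$ produces a linear oblique-derivative condition for $H_\lambda$. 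On the remaining sides $H_\lambda\equiv 0$ on $\{p=p_0\}\cup\{q=\lambda\}$, while the hypothesis yields the crucial inequality $H_\lambda(0,p)=h(2\lambda,p)-h(0,p)\geq 0$.

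A standard initialization (weak maximum principle in the narrow strip $R_\lambda$ with oblique boundary condition on top) gives $H_\lambda\geq 0$ in $\ov{R_\lambda}$ for all $\lambda>0$ sufficiently small, so
\[
\Lambda:=\sup\{\lambda>0\,:\,H_\mu\geq 0\text{ in }\ov{R_\mu}\text{ for all }\mu\in(0,\lambda]\}
\]
is positive and, since $\eta$ is periodic and non-flat, finite. By continuity $H_\Lambda\geq 0$ in $\ov{R_\Lambda}$, and the entire argument reduces to showing $H_\Lambda\equiv 0$. Once this is established, the identity $h(q,p)=h(2\Lambda-q,p)$ combined with the streamline monotonicity $h_q>0$ on $(0,\Lambda)\times[p_0,0]$ -- obtained by the strong maximum principle applied to equation \eqref{EL} exactly as in the proof of Lemma \ref{L:1} -- identifies $q=0$ as the unique trough per period, $q=\Lambda$ as the unique crest, and $2\Lambda$ as the minimal period.

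The main obstacle, consuming essentially all the work, is to rule out $H_\Lambda\not\equiv 0$. In that case the strong elliptic maximum principle forces $H_\Lambda>0$ in $R_\Lambda$, while the maximality of $\Lambda$ produces a sequence $\lambda_n>\Lambda$ with $\lambda_n\to\Lambda$ and points $(q_n,p_n)\in\ov{R_{\lambda_n}}$ at which $H_{\lambda_n}$ attains a negative minimum; an accumulation point $(q_*,p_*)$ must then lie on $\p R_\Lambda$ with $H_\Lambda(q_*,p_*)=0$ and the natural first-order conditions there. A case analysis by the location of $(q_*,p_*)$ follows: interior points of the sides $\{q=0\}$, $\{q=\Lambda\}$ and $\{p=p_0\}$ are ruled out by Hopf's boundary point lemma; interior points of the top $\{p=0\}$ by Hopf's lemma for the oblique-derivative condition; and the corner points by Serrin's corner point lemma (Lemma \ref{L:S}), once the off-diagonal smallness bound \eqref{SL} on the mixed coefficient $a_{12}$ of $\mathcal{L}_\Lambda$ is verified near each corner. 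The delicate case $(D)$ is the top corner, where the nonlinear Bernoulli condition interacts with the bulk equation; verifying \eqref{SL} there requires an additional degree of regularity of the coefficients $a_1,a_2$ in \eqref{coeff} up to $\{p=0\}$, and this is exactly where the Lipschitz continuity of $\gamma$ on $[p_1,0]$ enters, furnishing the extra differentiability of $\gamma(-\psi)$ needed near the free surface.

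With $H_\Lambda\equiv 0$ in hand, the symmetry of $h$ about $q=\Lambda$ is global by periodicity, and the real-analyticity of the streamlines from \cite{EM13}, combined with the principle of analytic continuation, precludes any smaller period and thereby gives the single-crest/single-trough structure of $\eta$. I therefore expect almost the entire effort to be concentrated in the case $(D)$ corner analysis, with the other cases of the moving-plane limit reducing to routine applications of the Hopf and Serrin boundary point lemmas in the manner pioneered by Okamoto--Sh{\=o}ji and Constantin--Escher.
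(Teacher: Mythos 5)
Your moving-plane outline is in the right spirit, but two of its load-bearing steps fail as stated, and both are exactly the points where the paper takes a different and necessary route. The first is your definition of $\Lambda$ and the initialization. You define $\Lambda$ through the positivity of $H_\mu$ on all of $\ov{R_\mu}$ and propose to start the plane via a ``weak maximum principle in the narrow strip with oblique boundary condition on top.'' Subtracting the two Bernoulli conditions gives, on $p=0$,
\[
(\wt h_q+h_q)H_q+(2g\wt h-Q)(\wt h_p+h_p)H_p+2g h_p^2\,H=0,
\]
and the zeroth-order coefficient $2gh_p^2$ is strictly \emph{positive}, i.e.\ of the wrong sign for a minimum principle: at a putative negative minimum of $H_\lambda$ in the interior of the top edge, Hopf's lemma ($H_p<0$, $H_q=0$) is perfectly compatible with this relation. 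Narrowness in $q$ does not remove the top edge from the boundary, so this is not a routine initialization -- it is precisely the sign obstruction that forced restrictions on $\gamma'$ in the earlier symmetry literature. The paper avoids it altogether by defining $\Lambda$ geometrically, as the largest $\lambda$ for which the reflection $\0_\lambda^r$ of $\{0<x<\lambda\}$ about $x=\lambda$ stays inside $\0_\eta$ (positivity of $\Lambda$ coming from the analyticity of $\eta$, which yields $\eta'>0$ on some $(0,\ep)$). On the rectangle $(\Lambda,2\Lambda)\times(p_0,0)$ one then has $H\le 0$ on \emph{all four sides} -- on $p=0$ by the geometric inclusion and on $q=2\Lambda$ by the hypothesis that $h(0,\cdot)$ is the global minimum -- so only Dirichlet-type data and $\mathcal{L}H=0$ are ever used, and the linearized Bernoulli condition never has to serve as a maximum-principle boundary condition.

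The second gap is your case $(D)$. The obstruction at the crest corner is not a regularity deficiency of the coefficients $a_1,a_2$: it is that the mixed coefficient $a_{12}=-2\wt h_p\wt h_q$ of \eqref{EO} simply does not vanish on the crest line ($h_q$ vanishes on the trough line, not the crest line), so the structural hypothesis \eqref{SL} of Lemma \ref{L:S} fails there and no amount of extra smoothness of $\gamma(-\psi)$ restores it. The paper's fix is a change of variables: it returns to the physical plane and applies Serrin's lemma to $\Psi(x,y)=\psi(2\Lambda-x,y)-\psi(x,y)$, whose principal part is the Laplacian, so $a_{12}\equiv 0$ and \eqref{SL} is trivial. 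The price is a zeroth-order term, $\Delta\Psi=c\,\Psi$ with $c$ the difference quotient of $\gamma$ along the two stream functions; the Lipschitz continuity of $\gamma$ near $p=0$ enters here, and only here, to make $c$ bounded near the crest, after which the multiplier $G=e^{\beta x}\Psi$ with $\beta^2\ge\sup c$ eliminates the zeroth-order term so that Serrin's lemma applies to $G$; the contradiction then comes from checking that all first- and second-order derivatives of $G$ vanish at the crest (using $\psi_{xy}=0$ there, obtained by differentiating the Bernoulli condition). Without these two ingredients -- the geometric choice of $\Lambda$ and the passage to physical variables in case $(D)$ -- your argument does not close.
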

\begin{proof} Without loss of generality, we may assume that $q=0$ is the point where all the mappings $h(\cdot,p), p\in[p_0,0],$ attain their global minimum.
Because $h\in C ^2_{\rm per}(\ov\0)$ is a classical solution of \eqref{eq:HP}, it can be also interpreted as a weak solution of this problem,
that is $h$   satisfies the first equation of \eqref{PB} in the following weak sense
\begin{equation}\label{PB1}
 \int_\0\frac{h_q}{h_p}\varphi_q-\left(\Gamma+\frac{1+h_q^2}{2h_p^2}\right)\varphi_p\,  d(q,p)=0\qquad\text{for all $\varphi\in C^1_0(\0)$},
\end{equation}
cf. \cite{CS1, EM13}.
Hereby, $\Gamma\in C^1([p_0,0])$ is the anti-derivative of $\gamma$ satisfying  $\Gamma(0)=0$, and $ C^1_0(\0)$ is
the space containing continuously differentiable functions with compact support in $\0.$
Whence, we find all the assumptions of \cite[Theorem 3.1]{EM13} satisfied, fact which ensures us that the functions $h(\cdot,p)$ are real-analytic for all $p\in[p_0,0].$
Since $\eta=h(\cdot,0)-d,$  the principle of analytic continuation  implies that there must exist an $\ep>0$ with $\eta'>0$ on $(0,\ep).$
Therefore, for each $\lambda\in(0,\ep/2],$ the reflection $\0_\lambda^r$ of
\[\0_\lambda:=\{(x,y)\,:\, \text{$0<x<\lambda$,\, $-d<y<\eta(x)$}\}\]
with respect to  $x=\lambda$ is a subset of $\0_\eta.$

Let $\Lambda:=\max\{\lambda>0\,:\, \0_\lambda ^r\subset \0_\eta\}.$
Then,  one of the following two cases occurs: $(1)$ $\Lambda=T/2;$ $(2)$ the reflection of free wave surface with respect to $x=\Lambda$ intersects the wave surface tangentially
(say at $(x_0,\eta(x_0))$ with $x_0\in[\Lambda, 2\Lambda]$).
We define now $H:\ov R\to\R$ by setting $H(q,p)=h(2\Lambda-q,p)-h(q,p)$ for all $(q,p)$ in the rectangle $R:=(\Lambda, 2\Lambda)\times (p_0,0),$ and prove
in a first step   that $H\equiv 0$.
Indeed, our assumption ensures that $H\leq 0$  on $q=2\Lambda.$
Moreover, with our   choice of $\Lambda$ we also have that $H\leq 0 $ on $p=0.$
The last equation of \eqref{PB} and the definition of $H$ combine now to give $H\leq0$ on $\p R.$
Moreover, as $h(2\Lambda-\cdot,\cdot)$ is also a solution of \eqref{eq:HP}, we find similarly as in the proof of Lemma \ref{L:1}
that $\mathcal{L}H=0$ in $\ov R$, whereby $\mathcal{L}$ is the uniformly elliptic operator \eqref{EO}, with the modification that the
coefficients of $\mathcal{L}$ are as in \eqref{EO} and  \eqref{coeff}, but  with $\wt h:=h(2\Lambda-\cdot,\cdot).$

Let as assume by contradiction that   $H\not\equiv0.$
There are several cases to be considered separately.

{$(A)$} \, Assume first  that the case $(1)$ occurs.
Letting $P:=(2\Lambda,0)$, the weak and strong maximum principles yield that  $H(P)=0=\max_R H$ and $H<H(P)$ in $R$.
Since $\wt h_q(2\Lambda,p)=-h_q(0,p)=0$ for any $p\in[p_0,0],$  there exists a positive constant $K$ with
\begin{align*}
 | (\wt h_q\wt h_p)(q,p) |\leq K|\wt h_q(q,p)- \wt h_q (2\Lambda,p)|\leq K(2\Lambda-q)
\end{align*}
for all  $(q,p)\in R$.
We are thus in the position of applying Lemma \ref{L:S} at the corner $P$.
On the other hand,  since $\Lambda=T/2,$ the periodicity of  $H$   implies that $H_q=H_p=H_{qq}=H_{pp}=0$ at $P$.
Moreover, since $h_q(0,p)=0$ for all $p\in[p_0,0],$ we additionally have that $h_{pq}(P)=0,$ and therefore  $H_{pq}(P)=-2h_{pq}(P)=0.$
This is in contradiction with Lemma \ref{L:S}, meaning that $H\equiv 0$ in this case.\smallskip

Let us assume in the following that $\Lambda<T/2$   and that only  $(2)$ occurs.
This means that at the point $(x_0,\eta(x_0))$ where the wave surface intersects its  reflection across $x=\Lambda$ we have
\begin{equation}\label{refl}
\begin{aligned}
 &h(x_0,0)=h(2\Lambda-x_0,0),\\
 &h_q(x_0,0)=-h_q(2\Lambda-x_0,0), \\
 &h_p(x_0,0)=h_p(2\Lambda-x_0,0),
 \end{aligned}
\end{equation}
the last relation following from the previous two and the second equation of \eqref{PB}.
We distinguish the following sub-cases. \smallskip

{$(B)$} \, Assume now that we are in the case  $(2)$ and that $x_0=2\Lambda$.
Again, we choose $P:=(2\Lambda,0).$
Then, since $H\not\equiv 0,$  we obtain that    $H(P)=0 $ and $H<H(P)$ in $R$.
In order to use Serrin's corner point lemma, the same arguments as in the previous case show
\begin{align*}
 | \wt h_q\wt h_p (q,p)|\leq K(2\Lambda-q)
\end{align*}
for all $(q,p)\in R$, with $K$ being a positive constant.
Clearly, $H_q(P)=H_p(P)=0,$ so that Serrin's lemma ensures us that
\[
\frac{\p^2 H}{\p s^2}(P)<0
 \]
 for all directions $s$ at $P$ that enter $R$ non-tangentially.
Therefore, for the choice $s:=( -\wt h_p(P),-1),$ it must hold
\begin{equation}\label{PME}
\frac{\p^2 H}{\p s^2}(P)=H_{pp}+2\wt h_p H_{pq}+ \wt h_p^2 H_{qq}<0 \qquad\text{at $P$}.
\end{equation}
On the other hand, if we  differentiate the second relation of \eqref{PB} with respect to $q$, we find that
 \begin{equation}\label{REL}
 h_qh_{qq}+(2gh-Q)h_ph_{pq}+ gh_qh_p^2=0\qquad\text{on $p=0$},
\end{equation}
and therefore $h_{pq}(P)=h_{pq}(0,0)=0,$ as additionally to \eqref{refl} we know also that  $h_{q}(P)=0$ in this case.
Hence, we get $H_{pq}(P)=-h_{pq}(0,0)-h_{pq}(P)=0$.
Recalling that  $\mathcal{L}H=0$  in $\ov R,$ we have
  \begin{equation*}
H_{pp}(P)+ \wt h_p^2(P) H_{qq}(P)=0,
\end{equation*}
and together with $H_{pq}(P)=0$ we obtained a  contradiction to \eqref{PME}.
Therefore,   $H\equiv 0$ also in this case. \smallskip

{$(C)$} \, Let us now consider  the case  $(2)$ when $x_0\in(\Lambda, 2\Lambda)$.
We define $P:=(x_0,0)$ and notice that   $H\not\equiv 0$ implies that $H(P)=0>H$ in $R.$
Using Hopf's lemma, it follows that $H_p(P)>0.$
However, a direct computation shows
\[
H_p(P)=h_p(2\Lambda-x_0,0)-h_p(x_0,0)=0,
\]
cf. \eqref{refl}, which is a contradiction.
Hence, we conclude that $H\equiv 0$ in $R$.\smallskip

{$(D)$} \, We are finally left with the case  $(2)$ when $x_0=\Lambda$.
In this situation we choose $P:=(\Lambda,0)$ and note that the weak and strong maximum principles and $H\not\equiv 0$ yield  $H(P)=0>H$  in $R$, cf. \eqref{refl}.
Therefore, $h(2\Lambda-q,p)<h(q,p)$ for all $(q,p)\in R.$
We cannot apply Serrin's corner point lemma  at this stage because the relation \eqref{SL} is not satisfied by the coefficient $a_{12}$ of $\mathcal{L}.$

Instead, we  define $\Psi:\ov{ \0_\Lambda ^r}\to \R$ by setting
\[
\Psi(x,y)=\psi(2\Lambda-x,y)-\psi(x,y)\qquad\text{for $(x,y)\in\ov{ \0_\Lambda ^r}$}.
\]
Setting $\wt P:= (\Lambda, \eta(\Lambda)),$ it is obvious that $\Psi(\wt P)=0.$
We now claim that $\Psi<0$ in $\0_\Lambda ^r.$
Indeed, pick $(x,y)\in \0_\Lambda ^r$ and let $p_1, p_2\in(p_0,0)$ satisfy $p_1:=-\psi(2\Lambda-x,y)$ and $p_2:=-\psi(x,y),$ or equivalently
$y=h (2\Lambda-x,p_1)-d=h(x,p_2)-d,$ cf. \eqref{semH} and \eqref{hodo}.
But, since $(x,p_1)\in R,$ $h(x,p_1)>h(2\Lambda-x,p_1)=h(x,p_2)$, and in view of \eqref{PBC} we find that $p_1>p_2.$
Consequently, $\Psi(x,y)=p_2-p_1<0,$ which is exactly what we claimed.
We use now the Lipschitz continuity of $\gamma$ on  $[p_1,0]$, with $p_1\in(p_0,0)$,  to write
\begin{align*}
 \Delta\Psi(x,y)&=\Delta \psi(2\Lambda-x,y)-\Delta \psi(x,y)=\gamma(-\psi(2\Lambda-x,y))-\gamma(-\psi(x,y))\\
 &=c(x,y)\Psi(x,y)
\end{align*}
for all $(x,y)$ in   $\ov{\B\cap\0_\Lambda ^r},$ whereby $\B$ is
a small ball centered at the crest $(\Lambda,\eta(\Lambda)).$
Hereby,   $c:\overline{\B\cap\0_\Lambda ^r}\to\R$ is the function
\[
c(x,y)=
\left\{
\begin{array}{llll}
 \displaystyle\frac{\gamma(-\psi(2\Lambda-x,y))-\gamma(-\psi(x,y))}{\psi(2\Lambda-x,y)-\psi(x,y)}, & \text{if $\psi(x,y)\neq \psi(2\Lambda-x,y))$},\\
 0, & \text{if $\psi(x,y)=\psi(2\Lambda-x,y))$,}
\end{array}
\right.
\]
and it is   bounded   if $\B$ is chosen sufficiently small.

Finally, we introduce   the function $G:  \overline{\B\cap\0_\Lambda ^r}\to\R$ by   $G(x,y):=e^{\beta x}\Psi(x,y) $, where $\beta$ is a positive constant.
We notice that  $G(\wt P)=0> G$ in $\B\cap\0_\Lambda ^r$, and
\begin{equation}\label{SF}
\Delta G-2\beta G_x=(c-\beta^2) G\geq0\qquad\text{in $\ov{\B\cap\0_\Lambda ^r}$,}
\end{equation}
if we choose $\beta^2\geq\sup_{\B\cap\0_\Lambda ^r} c.$
Because of the special form of the elliptic operator in \eqref{SF}, we are in the position of applying Lemma \ref{L:S} and
deduce that at least one of  the first or second order partial derivatives of $G$  does not vanish at $\wt P.$
To finish, let us observe from the definition of $\Psi$ that $\Psi=\Psi_y=\Psi_{xx}=\Psi_{yy}=0$ at $\wt P.$
Moreover, as $\psi(x,\eta(x))=0$ for all $x\in\R,$ we also have that $\psi_x(\wt P)=0,$ and therewith $\Psi_x(\wt P)=0.$
Additionally,   differentiating the second equation of \eqref{eq:psi} yields
\[
\psi_x\psi_{xx}+\eta'\psi_x\psi_{xy}+\psi_y\psi_{xy}+\eta'\psi_y\psi_{yy}+g\eta'=0\qquad \text{at $\wt P,$}
\]
and, since $\psi_y<0,$ we find that $\Psi_{xy}(\wt P)=-2\psi_{xy}(\wt P)=0.$
These relations imply that all first and second order partial derivatives of $G$ vanish at $\wt P,$ which is a contradiction.
Consequently, $H\equiv0$ in $R.$

We infer  now from the analysis  in $(A)$, $(B)$, $(C)$, and $(D)$ that $H\equiv 0$ in $R$, that is $h(2\Lambda-q,p)=h(q,p)$ for all $(q,p)\in R,$
with $(\Lambda,\eta(\Lambda))$ being the first local maximum of the wave surface.
We note that the wave profile being real-analytic is also strictly increasing on $[0,\Lambda].$
Let us show that  $(\Lambda,\eta(\Lambda))$ this is  the only maximum of $\eta$ within a minimal period.
Indeed, let $\wh h:\ov\0\to\R$   be the $2\Lambda-$periodic extension of the restriction    $h\big|{[0,2\Lambda]\times [p_0,0]}.$
By  assumption, $h_q(0,p)=0$ for all $p\in[p_0,0]$, so that we also have $h_{qp}(0,p)=0$ for each $p\in[p_0,0].$
These relations together with   $h(2\Lambda-q,p)=h(q,p)$ for all $(q,p)\in R $
ensure that $\wh h\in C^2_{\rm  per}(\ov\0)$ is also a solution  of \eqref{eq:HP}.
But, the same argument as in the first part of the proof shows that $\wh h(\cdot,p)$ is a real-analytic map for all $p\in[p_0,0].$
Particularly, since $\wh h=h$ on $[0,2\Lambda]\times [p_0,0],$ the principle of analytic continuation yields that the minimal period of $\eta=h(\cdot,0)-d$ is  $2\Lambda$.
Moreover, $\eta$ has a unique maximum (and minimum) per period and is symmetric with respect to the crest (and trough) line.
This completes the proof.
\end{proof}

\section*{Acknowledgments} I am grateful for the  suggestions and the comments made by the anonymous referees.

\end{document}